\newtheorem{theorem}{Theorem}[section]
\newtheorem{lemma}[theorem]{Lemma}
\theoremstyle{remark}
\newtheorem{remark}[theorem]{Remark}
\theoremstyle{definition}
\newtheorem{definition}[theorem]{Definition}
\DeclareMathOperator{\grad}{grad}
\DeclareMathOperator*{\esslimsup}{ess\;lim\;sup}
\DeclareMathOperator*{\esssup}{ess\;sup}
\DeclareMathOperator{\Div}{div}
\newcommand{\entropy}{\mathcal E}
\newcommand{\entropyproduction}{D\mathcal E}
\newcommand {\R} {\mathbb{R}}
\newcommand{\cd}{\,d}
\newcommand{\problem}{\eqref{eq:p1}--\eqref{eq:p3}}
\newcommand{\assumptions}{\eqref{eq:g1}--\eqref{eq:g}}
\numberwithin{equation}{section}
\date{}
\title[Unbalanced Sobolev inequalities]{Convex Sobolev inequalities related to unbalanced optimal transport}
\author[S.~Kondratyev]{Stanislav Kondratyev}
\address[S.~Kondratyev]{CMUC, Department of Mathematics, University of 
Coimbra, 3001-501 Coimbra, Portugal}{}
\email{kondratyev@mat.uc.pt}
\author[D.~Vorotnikov]{Dmitry Vorotnikov}
\address[D.~Vorotnikov]{CMUC, Department of Mathematics, University of Coimbra, 3001-501 Coimbra, Portugal}{}
\email{mitvorot@mat.uc.pt}
\begin{document}
\begin{abstract}
We study the behaviour of various Lyapunov functionals (relative entropies) along the solutions of a family of nonlinear drift-diffusion-reaction equations coming from statistical mechanics and population dynamics. These equations can be viewed as 
gradient flows over the space of Radon measures equipped with the Hellinger-Kantorovich distance. The driving functionals of the gradient flows are not assumed to be geodesically convex or semi-convex.  We prove new 
isoperimetric-type functional inequalities, allowing us to control the 
relative entropies by their productions, which yields the exponential decay of the relative entropies. 
\end{abstract}
\maketitle

Keywords: functional inequalities, optimal transport, reaction-diffusion, fitness-driven dispersal, entropy, exponential decay

\vspace{10pt}
\textbf{MSC [2010] 26D10, 35K57, 35B40, 49Q20, 58B20}

\section{Introduction}
The unbalanced optimal transport \cite{LMS18,KMV16A,CP18,LMS16,CPSV18,Rez15} interpolates between the classical Monge-Kantorovich transport \cite{Vil03,Vil08} and the optimal information transport \cite{M15}. It equips the space of finite Radon measures with a formal Riemannian structure so that certain classes of reaction-diffusion equations and systems can be interpreted as gradient flows. This paper continues our investigation \cite{KMV16A,KMV16B,KMV17,KV17,KV19B} of such gradient flows and associated functional inequalities, see also \cite{CDM17,MG16,GLM17} for related studies. 

The class of PDEs that we consider in this paper is
\begin{align}
\partial_t \rho & = -\Div (\rho \nabla f) + f\rho, & (x, t) & \in \Omega 
\times (0, \infty),
\label{eq:p1}
\\
\rho \frac{\partial f}{\partial \nu} & = 0, & (x, t) & \in \partial \Omega 
\times (0, \infty),
\label{eq:p2}
\\
\rho & = \rho^0\ge 0, & (x, t) & \in \Omega \times {0}.
\label{eq:p3}
\end{align} Here $f=f(x,\rho(x,t))$ is a nonlinear function of $x$ and $\rho$ which is required to have a certain structure specified below in \eqref{f}, and $\Omega\subset\mathbb R^d$  is an open connected bounded domain admitting the relative isoperimetric 
inequality,  cf. \cite{Mazja},
\begin{equation}
\label{eq:iso}
P(A; \Omega) \ge C_\Omega \min(|A|^{\frac {d-1}d}, |\Omega \setminus A|^{\frac {d-1}d})
.\end{equation} All our results remain valid if $\Omega$ is a periodic box $\mathbb T^d$; in this case \eqref{eq:p2} is omitted.

The drift-diffusion-reaction equation \eqref{eq:p1} appears in statistical mechanics \cite{F04}. It also describes nonlinear fitness-driven models of population dynamics, cf. \cite{mc90,cosner05,cos13,T18,KV17}, where it is assumed that the 
dispersal strategy is determined by a local intrinsic characteristic of 
organisms called fitness. We refer to Section \ref{s:back} and to \cite{KV17} for more detailed discussions.

Let $g \colon (0, \infty) \to \mathbb R$ and $\psi \colon [0, \infty) \to 
\mathbb R$ be fixed $C^1$-smooth functions,  which satisfy the following assumptions:
\begin{gather}
g(1) = 0; \qquad g'(s) > 0 \ (s > 0),
\label{eq:g1}
\\
\psi(1) = 0, \qquad \psi(s) > 0 \ (s \ne 1),
\label{eq:psi1}
\\
\psi \in C^2(0, +\infty),\ \psi''(s) > 0 \ (s > 0, \ s \ne 1),
\label{eq:psipp}
\\
\lim_{s \to \infty} \psi'(x) = \infty,
\label{eq:infty2}
\\
|g(s)| + s |g'(s)|
\le h(s)
\quad
\text{a.~a. } s > 0; \ h \in L^1_\text{loc}[0, \infty)
,\label{eq:f1d}
\\
\ sg(s) \in C( [0, +\infty)).
\label{eq:g}
\end{gather}

Let $\rho_\infty \colon \overline \Omega \to \mathbb R$ be a fixed smooth strictly
positive function satisfying
\begin{equation}\label{eq:probm}
\int_\Omega \rho_\infty \, dx = 1.
\end{equation}

Define
\begin{equation} \label{f}
f=f(x,\rho(x)):= - g\left(\frac{\rho(x)}{\rho_\infty(x)}\right).
\end{equation}
Thus, the functions $g$ and $\rho_\infty$ determine the problem \problem, and the function $\psi$ is merely needed to define a Lyapunov functional for this problem,
\begin{equation}
0\le \entropy_\psi(\rho) := \int_\Omega \psi\left(\frac{\rho}{\rho_\infty}\right)
\rho_\infty \, dx,
\end{equation} which will be referred to as the relative entropy. Obviously, $\entropy_\psi(\rho)=0$ if and only if $\rho\equiv\rho_\infty$.
Formally calculating $\partial_t \entropy_\psi(\rho_t)$ along a solution 
of~\eqref{eq:p1}--\eqref{eq:p3} we obtain
\begin{equation*}
\partial_t \entropy_\psi(\rho_t) = - \entropyproduction_\psi(\rho_t)
,
\end{equation*}
where the entropy production $\entropyproduction_\psi$ is defined by
\begin{equation*}
\entropyproduction_\psi(\rho)
:= \int_\Omega
g'\left(\frac{\rho}{\rho_\infty}\right)
\psi''\left(\frac{\rho}{\rho_\infty}\right)
\left| \nabla \left(\frac{\rho}{\rho_\infty}\right) \right|^2
\rho \, dx
+ \int_\Omega g\left(\frac{\rho}{\rho_\infty}\right)
\psi'\left(\frac{\rho}{\rho_\infty}\right) \rho
\, dx
\end{equation*}

Setting
\begin{equation*}
r = \frac{\rho}{\rho_\infty}
,
\end{equation*}
we can write
\begin{gather}
\entropy_\psi(\rho) = \int_\Omega \psi(r) \, d\rho_\infty
\label{eq:epsirho}
\\
\entropyproduction_\psi(\rho)
=
\int_\Omega rg(r)
\psi'(r)
d\rho_\infty
+
\int_\Omega rg'(r)
\psi''(r)
| \nabla r|^2
d\rho_\infty
\label{eq:depsirho}
\end{gather}

Note that problem \problem ~can be viewed as a formal gradient flow (with respect to the unbalanced Hellinger-Kantorovich Riemannian structure) of the driving functional $\entropyproduction_{\psi_g}(\rho)$, where 
\begin{equation} \psi_g(s):=\int_1^s g(\xi)\,d\xi,\label{entcond}
\end{equation} see Section \ref{s:back} for the details.  
We are interested in the exponential decay of the Lyapunov functional \eqref{eq:epsirho} along the trajectories of this gradient flow. This is related to the entropy-entropy production inequalities of the form
\begin{equation}\label{eq:eepi} \entropy_\psi(\rho) \lesssim \entropyproduction_\psi(\rho).\end{equation} They can be viewed as unbalanced generalizations of the convex Sobolev inequalities \cite{AMTU01,BE85,J16}, see Section \ref{s:back}.

The main results of the paper are convex Sobolev inequalities akin to \eqref{eq:eepi}, see Theorems \ref{th:eep} and \ref{th:psi}, and existence and asymptotics of weak solutions to \problem, see Theorem \ref{th:ex}.

\section{Background and discussion} Assume for a while that $\Omega$ is a torus or is 
convex, although this is not required for our main results. The gradient of a scalar functional $\mathcal{E}$ on the space of finite Radon measures over $\overline\Omega$ with respect to the Hellinger-Kantorovich Riemannian structure (also known as the Wasserstein-Fisher-Rao one)  was calculated 
in~\cite{KMV16A,LMS16}:
\begin{equation*}
\grad_{HK} \mathcal{E}(\rho) =-\Div \left(\rho \nabla \frac{\delta \mathcal 
        E}{\delta \rho}\right) + u\frac{\delta \mathcal E}{\delta \rho} 
.
\end{equation*} The first term on the right-hand side is the Otto-Wasserstein gradient $\grad_{W} \mathcal{E}(\rho)$, cf. \cite{otto01,Vil03}, and the second one is the Hellinger-Fisher-Rao gradient $\grad_{H} \mathcal{E}(\rho)$, cf. \cite{KLMP}.  It is easy to compute that $\frac{\entropyproduction_{\psi_g}(\rho)}{\delta \rho}=-f(x,\rho)$, hence \problem~ may be interpreted as a gradient flow: \begin{equation}
\partial_t \rho=-\grad_{HK} \entropyproduction_{\psi_g}(\rho),\quad \rho(0)=\rho^0.
\label{eq:gradf}
\end{equation} The production of the relative entropy $\entropy_\psi(\rho)$ along the Otto-Wasserstein gradient flow \begin{equation}
\partial_t \rho=-\grad_{W} \entropyproduction_{\psi_g}(\rho)
\label{eq:gradf1}
\end{equation} is $$\entropyproduction^W_\psi(\rho)
:=
\int_\Omega rg'(r)
\psi''(r)
| \nabla r|^2
d\rho_\infty.$$ Similarly, the production of the same entropy along the Hellinger gradient flow \begin{equation}
\partial_t \rho=-\grad_{H} \entropyproduction_{\psi_g}(\rho)
\label{eq:gradf2}
\end{equation} is $$\entropyproduction^H_\psi(\rho)
:=
\int_\Omega rg(r)
\psi'(r)
d\rho_\infty.
$$ 

In the case of non-convex $\Omega$ we can abuse the terminology and still refer to  
\problem~ as to a gradient flow. 

It is clear that $$\entropyproduction^W_\psi(\rho)+\entropyproduction^H_\psi(\rho)=\entropyproduction_\psi(\rho).$$ Generally speaking, neither the Otto-Wasserstein nor the Fisher-Rao entropy production are able to control the relative entropy, so \eqref{eq:eepi} is a result of an interplay between the reaction, diffusion and drift. A simple counterexample to \begin{equation}\label{eq:frao}\entropy_\psi(\rho) \lesssim \entropyproduction_\psi^H(\rho)\end{equation} is $\rho_\infty 1_{A}$ with $A$ being a proper subset of $\Omega$. Indeed, $\entropyproduction_\psi^H(\rho_\infty 1_{A})=0$ due to \eqref{eq:g1}, \eqref{eq:f1d} and \eqref{eq:g}. 
 It is easy to construct a smooth example by mollifying this one. A trivial counterexample to\begin{equation}\label{eq:wasss}\entropy_\psi(\rho) \lesssim \entropyproduction_\psi^W(\rho)\end{equation} is $k\rho_\infty$ where $k\neq 1$ is a non-negative constant.  
 
\begin{remark} Note that the two counterexamples intersect at $\rho\equiv 0$, which violates our target inequality \eqref{eq:eepi}. However, we will observe, cf. Theorems \ref{th:eep} and \ref{th:psi}, that it suffices keep the total mass $\int_\Omega \rho$ bounded away from $0$ to secure \eqref{eq:eepi}. \end{remark}

In view of \eqref{eq:probm}, in order to obtain more interesting and instructive examples we should restrict ourselves to probability densities $\rho$. The sequence $$\rho_n=\rho_\infty\frac n {n-1}1_{(\frac 1 n,1)}$$ of probability densities on $\Omega=(0,1)$ is a counterexample to \eqref{eq:frao}. Indeed, the left-hand side of \eqref{eq:frao} is of order $n^{-1} $ and the right-hand side is $\lesssim n^{-2}$.

Inequality \eqref{eq:wasss} for $\int_\Omega \rho=1$ deserves a more detailed discussion. 

Let us start with considering $g(s)=\log s$. In this case, as first observed in the seminal paper \cite{JKO}, the gradient flow \eqref{eq:gradf1} is the linear Fokker-Planck equation, and the celebrated Bakry-\'Emery approach allows one to prove \eqref{eq:wasss} for $\Omega=\R^d$ \cite{AMTU01,BE85,J16}. However, it is crucial to have concavity of $\frac 1 {\psi''(s)}$, which we never assume in this work. These instances of \eqref{eq:wasss} are referred to as \emph{convex Sobolev inequalities}, which inspired the title of our paper. The particular case \begin{equation}\label{beckf} \psi(s)
 =
\begin{cases}
\frac 1{p(p- 1)}\left(  s^p-ps + p-1 \right), & \text{if } 1< p\le 2
\\
s\log s -s + 1, & \text{if } p = 1
\end{cases}\end{equation} implies the log-Sobolev inequality for $p=1$, the Poincar\'e inequality for $p=2$ and Beckner's inequalities \cite{B89} for $1<p<2$. Namely,  \eqref{eq:wasss} may be rewritten as
\begin{equation}\label{eq:wassb}\int_\Omega r^p \, d\rho_\infty - \left(\int_\Omega r\, d\rho_\infty\right)^p\lesssim
\int_\Omega 
r^{p-2}
| \nabla r|^2
d\rho_\infty,  \quad 1<p\le 2.\end{equation} In contrast, our assumptions on $\psi$ admit any $p>2$ in \eqref{beckf}, which yields the following ``Beckner-Hellinger inequality'': \begin{multline}\label{eq:hkb}\int_\Omega r^p \, d\rho_\infty - \left(\int_\Omega r\, d\rho_\infty\right)^p\lesssim
\int_\Omega 
r^{p-2}
| \nabla r|^2
d\rho_\infty\\+\int_\Omega r\log\left(\frac r {\int_\Omega r\, d\rho_\infty}\right)
\left(r^{p-1}- \left(\int_\Omega r\, d\rho_\infty\right)^{p-1}\right)
d\rho_\infty, \quad p>2.\end{multline}

Consider now the case $g(s)=\frac {s^{\alpha-1}-1}{\alpha-1}$, $\alpha> 0$, $\alpha\neq 1$. Assume for simplicity that $|\Omega|=1$ and $\rho_\infty\equiv 1$. Then \eqref{eq:gradf1} is the porous medium equation, cf. \cite{otto01}.  The alleged inequality \eqref{eq:wasss} for the relative entropy \eqref{beckf}, $p\in(1,\infty)$, reads
\begin{equation}\label{eq:wassal}\int_\Omega r^p  - \left(\int_\Omega r\right)^p\lesssim
\left(\int_\Omega r\right)^{1-\alpha} \int_\Omega 
r^{p+\alpha-3}
| \nabla r|^2.\end{equation} 
Setting $q:=\frac {2p}{p+\alpha-1}$, $l:=\frac {p+\alpha-1}{2}$, $u:=r^{l}$, we rewrite \eqref{eq:wassal} in the form
\begin{equation}\label{eq:wassal1}\int_\Omega u^q  - \left(\int_\Omega u^{1/l}\right)^{lq}\lesssim {\left(\int_\Omega u^{1/l}\right)^{l(q-2)}} 
\int_\Omega 
| \nabla u|^2.\end{equation} The inequality \begin{equation}\label{eq:wassal2}\int_\Omega u^q  - \left(\int_\Omega u^{1/l}\right)^{lq}\lesssim
\left(\int_\Omega 
| \nabla u|^2\right)^{q/2}.\end{equation}
similar to \eqref{eq:wassal1} appears in \cite{CJS13}, see also \cite{CDGJ,DGG08}. It holds for $0<q<2,$ $lq> 1$, that is, for $\alpha>1$, $p> 1$. Assume for a moment that the the relative entropy, i.e., the left-hand side of \eqref{eq:wassal2}, is a priori bounded. Since $ql \ge 1$, the mass $\int_\Omega u^{1/l}$ is a priori bounded. Consequently, \eqref{eq:wassal2} is weaker than \eqref{eq:wassal1} since the exponent $q/2$ is less than $1$, and it is plausible that \eqref{eq:wassal1} cannot be true. Inequality \eqref{eq:wassal2} for $q=2$ is equivalent to Beckner's inequality \eqref{eq:wassb}. As explained in \cite{DGG08}, inequality \eqref{eq:wassal2} is wrong for $q>2$. In this connection, our results yield the following variant of \eqref{eq:wassal1}:
\begin{multline}\label{eq:wassal4}\int_\Omega u^q  - \left(\int_\Omega u^{1/l}\right)^{lq}\lesssim {\left(\int_\Omega u^{1/l}\right)^{l(q-2)}} 
\int_\Omega 
| \nabla u|^2\\+\left(\int_\Omega u^{1/l}\right)^{l(q-2)}\int_\Omega u^{1/l}\left(\frac {u^{(\alpha-1)/l}-\left(\int_\Omega u^{1/l}\right)^{\alpha-1}}{\alpha-1}\right)
\left(u^{(p-1)/l}- \left(\int_\Omega u^{1/l}\right)^{p-1}\right)\end{multline} for any $q>0$, $q\neq 2$, $1<lq<1+2l$, that is, any $\alpha>0$, $\alpha\neq 1$, $p>1$.

The counterparts of the alleged inequalities \eqref{eq:wassal} and \eqref{eq:wassal1} for $p=1$ are
\begin{equation}\label{eq:wassalp1}\int_\Omega r\log\left(\frac r {\int_\Omega r}\right)\lesssim
\left(\int_\Omega r\right)^{1-\alpha} \int_\Omega 
r^{\alpha-2}
| \nabla r|^2,\end{equation}
\begin{equation}\label{eq:wassal1p1}\int_\Omega u^q\log\left(\frac {u^q} {\int_\Omega u^q}\right)\lesssim {\left(\int_\Omega u^{q}\right)^{\frac{q -2}q}} 
\int_\Omega 
| \nabla u|^2.\end{equation} Here $q=\frac {2}{\alpha}$. This resembles the inequality \begin{equation}\label{eq:wassal2p1}\int_\Omega u^q\log\left(\frac {u^q} {\int_\Omega u^q}\right)\lesssim \left(
\int_\Omega 
| \nabla u|^2\right)^{q/2},\quad q<2,\end{equation} which was established in \cite{CDGJ,DGG08}. Since $q/2<1$, \eqref{eq:wassal2p1} is weaker than \eqref{eq:wassal1p1}, so it seems that \eqref{eq:wassal1p1} cannot be true. Our results imply the following variant of \eqref{eq:wassal1p1}:
\begin{multline}\label{eq:wassal4p1}\int_\Omega u^q\log\left(\frac {u^q} {\int_\Omega u^q}\right)\lesssim {\left(\int_\Omega u^{q}\right)^{\frac{q -2}q}} 
\int_\Omega 
| \nabla u|^2\\+{\left(\int_\Omega u^{q}\right)^{\frac{q -2}q}}\int_\Omega u^q\log\left(\frac {u^q} {\int_\Omega u^q}\right)\left(\frac {u^{2-q}-\left(\int_\Omega u^{q}\right)^{\frac 2 q -1}}{2-q}\right), \quad  q>0,\, q\neq 2.\end{multline}

\begin{remark}Inequalities \eqref{eq:hkb}, \eqref{eq:wassal4}, \eqref{eq:wassal4p1} are obtained assuming $\int_\Omega r\, d\rho_\infty=1$ (so that \eqref{eq:eep-a} is automatically satisfied), but hold without this normalization due to their homogeneity.\end{remark}

Many authors studied \eqref{eq:wasss} or related inequalities in the particular case $\psi=\psi_g$, that is, when the driving entropy is compared to its production, cf., e.g.,  \cite{otto01,Vil03,Vil08,AGS06,CJM01}. In this connection, the strict geodesic convexity of the driving entropy normally plays the pivotal role. In \cite{KV17} (see also \cite{KMV16A}) we studied \eqref{eq:eepi} for $\psi=\psi_g$ without assuming neither Otto-Wasserstein nor Hellinger-Kantorovich geodesic convexity (we also never assume any similar condition in the present paper).   The inequalities obtained there can be further refined \cite{KV19B} be means of studying gradient flows in the spherical Hellinger-Kantorovich space \cite{LM17,BV18}, which is beyond the scope of the present paper (though it may seem strange, even non-negativity of the entropy production is uncertain for the spherical Hellinger-Kantorovich flows in the case $\psi\neq\psi_g$).  The proofs in the present paper are more direct and simple than in \cite{KV17} due to the  ``quasihomogeneous structure'' \eqref{f}.  

Our last example concerns $g(s)=\frac 12 \log \frac{2s^2}{1 +s^2}$, which corresponds to the arctangential heat equation \cite{Bre19}. The relative entropy $\entropy_{\psi_g}$ generated by this $g$ is geodesically convex  neither in the Otto-Wasserstein nor in the Hellinger-Kantorovich sense, cf. \cite{KV19B}. Take $\psi(s)=s\log s-s +1$.  Then we infer the following inequality resembling the log-Sobolev one: \begin{equation}\label{eq:hkbtan}\int_\Omega (r\log r-r+1) \, d\rho_\infty \lesssim
\int_\Omega 
\frac 1 {r(1+r^2)}
| \nabla r|^2
d\rho_\infty\\+\int_\Omega r\log r\left(\log \frac{2r^2}{1 + r^2}\right)
d\rho_\infty\end{equation} provided $\int_\Omega r\, d\rho_\infty$ is bounded away from $0$.

Nonlinear Fokker-Planck equations akin to \eqref{eq:gradf1} model behaviour of various stochastic systems, see \cite{F05,Ts09,J16,BLMV}. The related
drift-diffusion-reaction equation \eqref{eq:p1} was suggested in \cite{F04}. On the other hand, equation \eqref{eq:p1} belongs to the class of nonlinear models (cf. \cite{cos13,T18,XBF17,KV17,KV19B,mc90,cosner05}) for the 
spatial dynamics of populations which are tending to achieve the \emph{ideal 
free distribution} \cite{FC69,fr72} (the distribution which happens if 
everybody is free to choose its location) in a heterogeneous environment. The 
dispersal strategy is determined by a local intrinsic characteristic of 
organisms called \emph{fitness}. The 
fitness manifests itself as a growth rate, and simultaneously affects the 
dispersal as the species move along its gradient towards the most favorable 
environment. In \eqref{eq:p1}, $\rho(x,t)$ is the density of organisms, and 
$f(x,\rho)$ is the fitness.  The equilibrium $\rho\equiv \rho_\infty$ when the fitness 
is constantly zero corresponds to the ideal free distribution. The works \cite{CW13,ccl13,ltw14,XBF17,KMV16A,KMV16B,KMV17,KV17} perform mathematical analysis of some of such fitness-driven models. Our Theorem 
\ref{th:ex} indicates that the populations converge to the ideal free 
distribution with an exponential rate.

\label{s:back}

\section{Main results}

We start by introducing the weak solutions to~\eqref{eq:p1}--\eqref{eq:p3}, following the lines of~\cite{KV17,KV19B}.

Define
\begin{equation*}
G(s) = \int_0^s \xi g'(\xi) \cd \xi \qquad (s \ge 0)
,
\end{equation*}
where the integral exists by~\eqref{eq:f1d}.  Observe that
\begin{equation*}
G'(s) = s g'(s) > 0, \quad (s > 0); \qquad G(0) = 0,
\end{equation*}
so that $G$ is a nonnegative continuous increasing function on $[0, \infty)$.

Set
\begin{equation*}
\Phi(x, u) = \rho_\infty(x) G\left(\frac{u}{\rho_\infty(x)}\right), \quad u\geq0
.
\end{equation*}
As in~\cite{KV17}, we can write~\eqref{eq:p1} in the form
\begin{equation}
\label{eq:p1Phi}
\partial_t \rho = \Delta \Phi - \Div(\Phi_x + \rho f_x) + \rho f
,
\end{equation}
where~$\Phi$ stands for~$\Phi(x, \rho(x, t))$.
\begin{definition}
Let $\rho^0 \in L^\infty(\Omega)$; $Q_T:=\Omega\times(0,T)$.  A function~$\rho \in L^\infty(Q_T)$ is
called a \emph{weak solution} of~\eqref{eq:p1}--\eqref{eq:p3} on $[0, T]$ if 
for $r = \rho/\rho_\infty$ we have $G(r(\cdot)) \in L^2(0, T; H^1(\Omega))$ 
and
\begin{equation}
\label{eq:def1}
\int_0^T
\int_\Omega
(\rho \partial_t \varphi + ( - \nabla \Phi + \Phi_x + \rho f_x) \cdot \nabla 
\varphi + f \rho \varphi) \cd x \cd t
=
\int_\Omega \rho^0(x) \varphi(x, 0) \cd x
\end{equation}
for any function $\varphi \in C^1(\overline \Omega \times [0, T])$ such that 
$\varphi(x, T) = 0$.
A function $\rho \in L^\infty_\text{loc}([0, \infty); L^\infty(\Omega))$ is 
called a \emph{weak solution} of \eqref{eq:p1}--\eqref{eq:p3} on $[0, \infty)$ 
if for any~$T > 0$ it is a weak solution on~$[0, T]$.
\end{definition}

\begin{remark}
\label{rem:l2}
For $\rho \in L^\infty(Q_T)$ we automatically have $G(r) \in L^\infty(Q_T)$, 
so the condition $G(r(\cdot)) \in L^2(0, T; H^1(\Omega))$ is equivalent to 
$rg'(r)\nabla r \in L^2(Q_T)$.  Here $r = \rho/\rho_\infty$.
\end{remark}

Formally, the integrand $ rg'(r) \psi''(r) | \nabla r|^2$ vanishes 
if~$r = 0$.  Otherwise it can be written as
\begin{equation*}
rg'(r) \psi''(r) | \nabla r|^2
=
\frac{1}{r} \frac{\psi''(r)}{g'(r)} |rg'(r)\nabla r|^2
=
\frac{1}{r} \frac{\psi''(r)}{g'(r)} |\nabla G(r)|^2
.
\end{equation*}
This motivates the following extension of the entropy production suitable for 
weak solutions.
\begin{definition}
If $\rho \in L^\infty(\Omega)$ and $G(r) \in H^1(\Omega)$, then the 
\emph{entropy production} is defined by
\begin{align}
\entropyproduction_\psi(\rho)
&=
\int_\Omega rg(r)
\psi'(r)
d\rho_\infty
+
\int_{[r > 0]}
rg'(r)
\psi''(r)
| \nabla r|^2
d\rho_\infty
\notag
\\&
\equiv
\int_\Omega rg(r)
\psi'(r)
d\rho_\infty
+
\int_{[r > 0]}
\frac{1}{r} \frac{\psi''(r)}{g'(r)} |\nabla G(r)|^2
d\rho_\infty
.
\label{eq:defdegen}
\end{align}
\end{definition}
\begin{remark}
Observe that although the integrand with the gradient in~\eqref{eq:defdegen} 
is a nonnegative measurable function on~$\Omega$, the integral, and hence the 
entropy production, may be infinite.
\end{remark}

The following entropy-entropy production inequality applicable to weak 
solutions is based on an isoperimetric-type inequality established in 
Section~\ref{sec:ineq}.

\begin{theorem}[Entropy-entropy production inequality]
\label{th:eep}
Suppose that~$g$ and~$\psi$ satisfy~\assumptions.  Let $U \subset 
L_+^\infty(\Omega)$ be a set of functions such that for any $\rho \in U$ and 
$r = \rho/\rho_\infty$, we have $G(r) \in H^1(\Omega)$ and
\begin{gather}
\label{eq:eep-a}
\inf_{\rho \in U} \| \rho \|_{L^1(\Omega)} > 0
,
\\
\sup \{ \entropy_\psi(\rho) \colon \rho \in U \} < \infty
. \label{eq:eep-aa}
\end{gather}
Then there exists $C_U$ such that
\begin{equation}
\label{eq:eep-b}
\entropy_\psi(\rho) \le C_U \entropyproduction_\psi(\rho) \quad (\rho \in U)
.
\end{equation}
\end{theorem}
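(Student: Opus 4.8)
The plan is to bound $\entropy_\psi(\rho)$ region by region: convexity of $\psi$ and monotonicity of $g$ will handle everything except the ``near‑vacuum'' set where $r=\rho/\rho_\infty$ is close to $0$, and the relative isoperimetric inequality~\eqref{eq:iso}, together with the constraints~\eqref{eq:eep-a} and~\eqref{eq:eep-aa}, will control that remaining set. Write $h(r):=rg(r)\psi'(r)$ for the reaction density and $\underline\rho:=\min_{\overline\Omega}\rho_\infty$, $\overline\rho:=\max_{\overline\Omega}\rho_\infty$. First I record elementary facts: $\psi'(1)=0$, and by~\eqref{eq:g1} and~\eqref{eq:psipp} both $g(r)$ and $\psi'(r)$ have the sign of $r-1$; hence $h\ge0$, $h$ is continuous on $[0,\infty)$ (at $0$ by~\eqref{eq:psi1} and~\eqref{eq:g}), $h$ vanishes exactly at $r\in\{0,1\}$, and $h$ is superlinear at $+\infty$ (since $g$ is increasing and $\psi'(r)\to\infty$ by~\eqref{eq:infty2}). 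Moreover $\psi$ is convex and superlinear by~\eqref{eq:psipp}--\eqref{eq:infty2}, so~\eqref{eq:eep-aa} makes the family $\{\,r:\rho\in U\,\}$ uniformly integrable. Finally, if $\entropyproduction_\psi(\rho)$ exceeds any fixed threshold $\tau>0$, then~\eqref{eq:eep-aa} already gives $\entropy_\psi(\rho)\le(\sup_U\entropy_\psi)\tau^{-1}\entropyproduction_\psi(\rho)$; so it suffices to produce a bound $\entropy_\psi(\rho)\le C\,\entropyproduction_\psi(\rho)$ valid whenever $\entropyproduction_\psi(\rho)<\tau$, for some $\tau,C$ depending only on $U$.

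Next come the pointwise estimates. Convexity of $\psi$ gives $\psi(r)\le|r-1|\,|\psi'(r)|$ for every $r\ge0$. Fix small $\epsilon,\delta\in(0,1)$ with $\epsilon<1-\delta$ and a large $N>1+\delta$. Then on $[\,|r-1|\le\delta\,]$ we have $\psi(r)\le|r-1|\,|\psi'(r)|\le C_\delta\,h(r)$, because $|g(r)|\ge c_\delta|r-1|$ there ($g(r)/(r-1)$ is continuous and positive on $[1-\delta,1+\delta]$, with value $g'(1)>0$ at $1$); on $(N,\infty)$ we have $\psi(r)\le r\psi'(r)=h(r)/g(r)\le h(r)/g(N)$; and on the compact range $[\epsilon,1-\delta]\cup[1+\delta,N]$, $\psi$ is bounded while $\inf h>0$, so by Chebyshev the measure of that part is $\lesssim\int_\Omega h(r)\,d\rho_\infty$. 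Summing, and using $\int_\Omega h(r)\,d\rho_\infty\le\entropyproduction_\psi(\rho)$,
\begin{equation*}
\entropy_\psi(\rho)\le C_1\,\entropyproduction_\psi(\rho)+\overline\rho\Bigl(\sup_{[0,\epsilon]}\psi\Bigr)\,\bigl|[\,r<\epsilon\,]\bigr|,
\end{equation*}
with $C_1=C_1(\epsilon,\delta,N,g,\psi,\rho_\infty)$, so the whole difficulty is the near‑vacuum set $[\,r<\epsilon\,]$.

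To control $\bigl|[\,r<\epsilon\,]\bigr|$ I use~\eqref{eq:iso}. The truncation $\bar r:=\min(\max(r,\epsilon),1-\delta)$ lies in $H^1(\Omega)$ — it equals a Lipschitz function of $G(r)$ on the range $[\epsilon,1-\delta]$, where $rg'(r)=G'(r)>0$, and $G(r)\in H^1(\Omega)$ by Remark~\ref{rem:l2} — with $\nabla\bar r=\mathbf 1_{[\epsilon<r<1-\delta]}\nabla r$. By the coarea formula and~\eqref{eq:iso} applied to each superlevel set,
\begin{equation*}
\int_\Omega|\nabla\bar r|\,dx=\int_\epsilon^{1-\delta}P([\,r>t\,];\Omega)\,dt\ge(1-\delta-\epsilon)\,C_\Omega\,\min\bigl(\bigl|[\,r<\epsilon\,]\bigr|,\ \bigl|[\,r\ge1-\delta\,]\bigr|\bigr)^{\frac{d-1}{d}},
\end{equation*}
while by Cauchy--Schwarz, bounding $\bigl|[\epsilon<r<1-\delta]\bigr|$ through the reaction term of~\eqref{eq:defdegen} and $\int_{[\epsilon<r<1-\delta]}|\nabla r|^2$ through its diffusion term (both weights $h$ and $rg'\psi''$ being bounded away from $0$ on $[\epsilon,1-\delta]$, and $\rho_\infty$ bounded above and below),
\begin{equation*}
\int_\Omega|\nabla\bar r|\,dx\le\bigl|[\epsilon<r<1-\delta]\bigr|^{1/2}\Bigl(\int_{[\epsilon<r<1-\delta]}|\nabla r|^2\,dx\Bigr)^{1/2}\le C_2\,\entropyproduction_\psi(\rho).
\end{equation*}
It remains to know the minimum is attained at $\bigl|[\,r<\epsilon\,]\bigr|$, i.e.\ that $[\,r\ge1-\delta\,]$ is macroscopic. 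I prove by contradiction that there are $\gamma_0,\tau_0>0$ (depending on $U$) with $\bigl|[\,r\ge1-\delta\,]\bigr|\ge\gamma_0$ whenever $\rho\in U$, $\entropyproduction_\psi(\rho)<\tau_0$: if $\rho_n\in U$ had $\entropyproduction_\psi(\rho_n)\to0$ and $\bigl|[\,r_n\ge1-\delta\,]\bigr|\to0$, then $\int_\Omega h(r_n)\,d\rho_\infty\to0$ would force (Chebyshev, $\inf h>0$ on $[\epsilon,1-\delta]$) $\bigl|[\epsilon\le r_n\le1-\delta]\bigr|\to0$, hence $\bigl|[\,r_n\ge\epsilon\,]\bigr|\to0$, so $r_n\to0$ in measure, and uniform integrability would give $\|\rho_n\|_{L^1(\Omega)}\le\epsilon\,\overline\rho|\Omega|+o(1)$, which contradicts~\eqref{eq:eep-a} once $\epsilon$ is chosen small (depending on $\inf_U\|\rho\|_{L^1}$). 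Hence, for $\entropyproduction_\psi(\rho)$ below a suitable $\tau\le\tau_0$, the coarea bound also forces $\min(\cdots)<\gamma_0$, so $\min(\cdots)=\bigl|[\,r<\epsilon\,]\bigr|$, and $\bigl|[\,r<\epsilon\,]\bigr|^{\frac{d-1}{d}}\le C_3\,\entropyproduction_\psi(\rho)$, whence $\bigl|[\,r<\epsilon\,]\bigr|=\bigl|[\,r<\epsilon\,]\bigr|^{1/d}\bigl|[\,r<\epsilon\,]\bigr|^{\frac{d-1}{d}}\le|\Omega|^{1/d}C_3\,\entropyproduction_\psi(\rho)$. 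Plugging this into the inequality of the previous paragraph yields $\entropy_\psi(\rho)\le C\,\entropyproduction_\psi(\rho)$ for $\entropyproduction_\psi(\rho)<\tau$, and the case $\entropyproduction_\psi(\rho)\ge\tau$ was already disposed of.

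I expect the last step to be the main obstacle. The reaction part of $\entropyproduction_\psi$ is blind to the vacuum region because $h$ vanishes at $r=0$, and the diffusion part only detects the \emph{transition} between the near‑vacuum and near‑equilibrium regions, not their sizes; the relative isoperimetric inequality~\eqref{eq:iso} is what converts that transition into a bound on $\bigl|[\,r<\epsilon\,]\bigr|$, and that bound is useful only because the near‑equilibrium region is macroscopic — which itself rests on the mass lower bound~\eqref{eq:eep-a}, the uniformity over $U$ being where~\eqref{eq:eep-aa} genuinely enters through uniform integrability. A secondary, routine point is the verification that $\bar r\in H^1(\Omega)$ and that the truncated gradient terms are those in~\eqref{eq:defdegen}, including the convention on $[\,r=0\,]$; this follows from Remark~\ref{rem:l2} and standard truncation arguments.
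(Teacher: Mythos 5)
Your proposal is correct, and it rests on the same three pillars as the paper's argument: the reaction term $\int_\Omega rg(r)\psi'(r)\,d\rho_\infty$ controls the entropy away from the near-vacuum set, the coarea formula combined with the relative isoperimetric inequality \eqref{eq:iso} turns the diffusion term on a transition band into a bound on $\big|[r<\epsilon]\big|$, and \eqref{eq:eep-a}--\eqref{eq:eep-aa} (uniform integrability via de la Vall\'ee Poussin) make the comparison set macroscopic. The execution differs in three respects. First, you work directly at the stated weak regularity by composing $G(r)\in H^1(\Omega)$ with the Lipschitz map $u\mapsto\min(\max(G^{-1}(u),\epsilon),1-\delta)$, so that $\bar r\in H^1(\Omega)$ and $\nabla\bar r=1_{[\epsilon<r<1-\delta]}\nabla G(r)/(rg'(r))$, which is exactly the quantity in \eqref{eq:defdegen}; this eliminates the paper's two-step structure (smooth-case Theorem~\ref{th:psi}, then approximation $G_n\to G(r)$ in $H^1$ and the Reverse Fatou Lemma~\ref{lem:fatou}). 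Do note that the composition must go through $G$, since $r$ itself need not lie in $H^1(\Omega)$; your sketch does this correctly, it is just not a truncation of $r$ per se. Second, you replace Lemma~\ref{lem:algterm} by the convexity bound $\psi(r)\le(r-1)\psi'(r)$, the estimate $|g(r)|\gtrsim|r-1|$ near $1$, the bound $\psi(r)\le r\psi'(r)\le rg(r)\psi'(r)/g(N)$ for large $r$, and Chebyshev on the intermediate compact range --- more elementary than the L'H\^opital step, and the Chebyshev bound on the band's measure is then reused in the Cauchy--Schwarz step, where your $\int_\Omega|\nabla\bar r|\lesssim\entropyproduction_\psi(\rho)$ plays the role of the paper's AM--GM estimate $\tau+m/\tau\ge 2\sqrt m$. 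Third, because you place the upper truncation level at $1-\delta$ (near $1$), the lower bound $\big|[r\ge 1-\delta]\big|\ge\gamma_0$ cannot hold for all of $U$ and you correctly obtain it only in the regime of small entropy production, disposing of the large-production regime trivially via \eqref{eq:eep-aa}; the paper instead takes the level $\beta$ small and gets $\big|[r\ge\beta]\big|\ge\delta>0$ unconditionally from the mass bound plus uniform integrability, which avoids the dichotomy and the contradiction argument. Both routes yield the theorem with constants depending only on the data in \eqref{eq:eep-a}--\eqref{eq:eep-aa}.
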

\begin{proof}
The idea is to use the isoperimetric-type inequality provided by 
Theorem~\ref{th:psi} (see Section~\ref{sec:ineq}).  Since we are dealing with 
a less regular setting at the moment, we argue by approximation.

Take $\rho \in U$ and as usual, put $r = \rho/\rho_\infty$.  Arguing as 
in~\cite[proof of Theorem~1.7]{KV17}, we see that there exists a sequence of 
functions $G_n \in C(\overline \Omega) \cap C^\infty(\Omega)$ taking values 
in~$(0, a)$, where $a < G(\infty)$, such that
\begin{equation*}
G_n \to G(r(\cdot)) \qquad \text{in $H^1$ and a.~e.\ in~$\Omega$}
.
\end{equation*}
Set $r_n(x) = G^{-1}(G_n(x))$ and $\rho_n(x) = r_n(x) \rho_\infty(x)$, so that 
$G_n(x) = G(r_n(x))$. Clearly, $r_n$ and~$\rho_n$ are positive and reasonably 
smooth, the sequences $\{r_n\}$ and $\{\rho_n\}$ are bounded in 
$L^\infty(Q_T)$ (specifically, the former is bounded by~$G^{-1}(a)$), and by 
the continuity of~$G^{-1}$ we have
\begin{gather*}
r_n \to r, \quad \rho_n \to \rho \text{ a.~e.\ in $\Omega$}
.
\end{gather*}
In particular, this implies that~$\rho_n$ converges to~$\rho$ 
in~$L^1(\Omega)$.  Further, by the Lebesgue Dominated Convergence we have
\begin{equation}
\entropy_\psi(\rho_n) \to \entropy_\psi(\rho)
.
\label{eq:tmp7}
\end{equation}
Thus, if we denote the infimum in~\eqref{eq:eep-a} by $d_U$ and the supremum 
in~\eqref{eq:eep-aa} by $E_U$, there is no loss of generality in assuming that 
$\|\rho_n\|_{L^1(\Omega)} \ge d_U/2$ and $\entropy_\psi(\rho_n) \le 2E_U$.  It 
follows from Theorem~\ref{th:psi} that there exist~$C$ and~$\sigma$ both 
depending on $d_U$ and $E_U$ (but not on the approximation nor on~$\rho$ 
itself) such that
\begin{equation}
\label{eq:tmp6}
\entropy_\psi(\rho_n) \le
C
\left(
\int_\Omega r_n g(r_n) \psi'(r_n) \cd \rho_\infty
+
\int_{[r \ge \sigma]}
r_n g'(r_n) \psi''(r_n) |\nabla r_n|^2 \cd \rho_\infty
\right)
.
\end{equation}

By the Lebesgue Dominated Convergence we have
\begin{equation}
\int_\Omega r_n g(r_n) \psi'(r_n) \cd \rho_\infty
\to
\int_\Omega r g(r) \psi'(r) \cd \rho_\infty
\label{eq:tmp8}
.
\end{equation}
Further, we have
\begin{equation*}
\int_{[r_n \ge \sigma]} r_n g'(r_n) \psi''(r_n) |\nabla r_n|^2 \cd \rho_\infty
=
\int_\Omega
1_{[r_n \ge \sigma]}
\frac{\psi''(r_n)}{r_n g'(r_n)}
|\nabla G_n|^2
\cd \rho_\infty
.
\end{equation*}
On one hand, $\nabla G_n \to \nabla G$ in $L^2(\Omega)$.  On the other hand, 
the functions
\begin{equation*}
h_n =
1_{[r_n \ge \sigma]}
\frac{\psi''(r_n)}{r_n g'(r_n)}
\end{equation*}
are uniformly bounded in $L^\infty(\Omega)$, and since we obviously have
\begin{equation*}
\limsup_{n \to \infty} 1_{[r_n \ge \sigma]} \le 1_{[r \ge \sigma]}
\qquad \text{a.~e.\ in $\Omega$}
,
\end{equation*}
we also have
\begin{equation*}
\limsup_{n \to \infty}
h_n(x)
\le
1_{[r \ge \sigma]}
\frac{\psi''(r)}{r g'(r)}
\qquad \text{a.~e.\ in $\Omega$}
.
\end{equation*}
Using Reverse Fatou's Lemma for products (Lemma~\ref{lem:fatou} in the Appendix), we 
obtain
\begin{align*}
\limsup_{n\to\infty}
\int_{[r_n \ge \sigma]}
r_n g'(r_n) \psi''(r_n) |\nabla r_n|^2 \cd \rho_\infty
& =
\limsup_{n\to\infty}
\int_\Omega
h_n |\nabla G_n|^2 \cd \rho_\infty
\\ &
\le
\int_\Omega
1_{[r \ge \sigma]}
\frac{\psi''(r)}{r g'(r)}
|\nabla G|^2 \cd \rho_\infty
\\ &
\le
\int_{[r > 0]}
r g'(r) \psi''(r) |\nabla r|^2 \cd \rho_\infty
.
\end{align*}
Combining this with \eqref{eq:tmp7} and \eqref{eq:tmp8}, we see that we can 
pass to the limit in~\eqref{eq:tmp6} and obtain~\eqref{eq:eep-b} with $C_U = 
C$.
\end{proof}

\begin{theorem}[Existence and asymptotics of weak solutions]
\label{th:ex}
Assume \assumptions.  Then for any~$\rho^0 \in L^\infty_+(\Omega)$ there 
exists a nonnegative weak solution $\rho \in L^\infty(\Omega \times (0, 
\infty))$ of problem \eqref{eq:p1}--\eqref{eq:p3} which enjoys the following properties:
\begin{enumerate}
\item
$\rho$ satisfies the entropy dissipation inequality in the sense of measures: 
for any smooth nonnegative compactly supported function $\chi \colon (0, T) 
\to \mathbb R$ we have
\begin{equation}
\label{eq:mea}
- \int_0^T \chi'(t) \entropy_\psi(\rho) \cd t
\le \int_0^T \chi(t)
\entropyproduction_\psi(\rho) \cd t
;
\end{equation}
\item
the initial entropy satisfies
\begin{equation}
\label{eq:init-entropy}
\esssup_{t > 0} \entropy_\psi(\rho(t)) \le \entropy_\psi(\rho^0);
\end{equation}
\item
$\rho$ satisfies the lower $L^1$-bound
\begin{equation}
\label{eq:lower}
\| \rho(t) \|_{L^1(\Omega)} \ge \| \min(\rho^0, \rho_\infty) \|_{L^1(\Omega)} 
\quad \text{a.~a.\ } t > 0
;
\end{equation}
\item
$\rho$ exponentially converges to~$\rho_\infty$ in the sense of entropy:
\begin{equation}
\label{eq:convergence-a}
\entropy_\psi(\rho(t)) \le \entropy_\psi(\rho^0) e^{-\gamma_\psi t} \quad 
\text{a.~a.\ } t > 0
,
\end{equation}
where $\gamma_\psi > 0$ can be chosen uniformly over initial data satisfying
\begin{equation}
\label{eq:convergence-b}
\| \min(\rho^0, \rho_\infty) \|_{L^1(\Omega)} \ge c, \quad 
\entropy_\psi(\rho^0) \le C
\end{equation}
with some $c, C > 0$;
\item
for any $p\in[2,+\infty)$, 
\begin{equation}
\label{eq:convergence-c}
\|\rho(t)-\rho_\infty\|_{L^p(\Omega)} \le   e^{-\gamma_p t}\,\left(1+\frac {\sup ~\rho_\infty}{\inf ~\rho_\infty}\right)\,\|\rho^0-\rho_\infty\|_{L^p(\Omega)}\quad 
\text{a.~a.\ } t > 0
,
\end{equation}
where $\gamma_p > 0$ can be chosen uniformly over initial data satisfying
\begin{equation}
\label{eq:convergence-b2}
\| \min(\rho^0, \rho_\infty) \|_{L^1(\Omega)} \ge c, \quad 
\|\rho^0\|_{L^p(\Omega)}^p \le C.
\end{equation}
\end{enumerate}
\end{theorem}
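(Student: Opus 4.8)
The plan is to construct $\rho$ by a non-degenerate parabolic regularization, as in \cite{KV17}, and then to read off the asymptotics from the entropy dissipation together with Theorem~\ref{th:eep}. Working in the variable $r=\rho/\rho_\infty$, I would regularize the $r$-form of \eqref{eq:p1} by adding $\varepsilon\rho_\infty^{-1}\Div(\rho_\infty\nabla r)$ to its right-hand side; equivalently this adds $\varepsilon\Delta\rho$ plus lower-order terms in \eqref{eq:p1Phi}, makes the problem uniformly parabolic, and --- crucially for property (3) --- keeps $\rho\equiv\rho_\infty$ a stationary solution. Together with a truncation of $g$ away from $0$ and a mollification of $\rho^0$ (the possible singularity of $g$ and $g'$ at $0$ being harmless, since only the well-behaved combinations $sg(s)$, $sg'(s)$, $G(s)$ occur, cf.\ \eqref{eq:f1d}, \eqref{eq:g}), classical quasilinear theory yields a unique smooth positive solution $\rho_\varepsilon$ on each $Q_T$. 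The $\varepsilon$-uniform estimates are: $0\le\rho_\varepsilon\le M$ with $M=M(\|\rho^0\|_{L^\infty})$ by the maximum principle; testing the $r$-equation with $\psi'(r)\rho_\infty$ gives, for every admissible $\psi$, the dissipation identity $\tfrac{d}{dt}\entropy_\psi(\rho_\varepsilon)=-\entropyproduction_\psi(\rho_\varepsilon)-\varepsilon\int_\Omega\psi''(r_\varepsilon)|\nabla r_\varepsilon|^2\,d\rho_\infty\le 0$, where $\entropyproduction_\psi\ge0$ because $g$ and $\psi'$ both have the sign of $r-1$; taking $\psi=\psi_g$ and using the $L^\infty$-bound then bounds $G(r_\varepsilon)$ in $L^2(0,T;H^1(\Omega))$; and the equation itself bounds $\partial_t\rho_\varepsilon$ in $L^2(0,T;(H^1(\Omega))^*)$.

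By Aubin--Lions, along a subsequence $\rho_\varepsilon\to\rho$ strongly in $L^2(Q_T)$ and a.e., while $\nabla G(r_\varepsilon)\rightharpoonup\nabla G(r)$ in $L^2$; passing to the limit in the weak formulation \eqref{eq:def1} (the zero-order and drift terms converging by a.e.\ convergence and domination, again via \eqref{eq:f1d}, \eqref{eq:g}) shows that $\rho$ is a weak solution, first on $[0,T]$ and then, by a diagonal argument, on $[0,\infty)$. Passing to the limit in the dissipation identity --- using that the gradient part of $\entropyproduction_\psi$ is convex in $\nabla G(r)$, hence weakly lower semicontinuous, while the zero-order part converges, as in \cite{KV17} --- gives property~(1), indeed the sharper $\entropy_\psi(\rho(t))+\int_s^t\entropyproduction_\psi(\rho)\,d\tau\le\entropy_\psi(\rho(s))$ for a.e.\ $0\le s\le t$ (with $s=0$, $\rho(0)=\rho^0$, after passing to the limit in the mollified initial entropy), valid for every admissible $\psi$; the case $s=0$ with the integral discarded is property~(2). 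For property~(3): at the regularized level $\rho_\infty$ is a solution and the reaction term $-g(r)\rho$ is nonnegative on $\{r\le 1\}$, so comparison with the solution started from $\min(\rho^0_\varepsilon,\rho_\infty)\le\rho^0_\varepsilon$ --- which therefore stays below $\rho_\infty$ --- gives $\|\rho_\varepsilon(t)\|_{L^1}\ge\|\min(\rho^0_\varepsilon,\rho_\infty)\|_{L^1}$, and this bound survives the $L^1$-limit.

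For property~(4), apply Theorem~\ref{th:eep} to $U=\{\rho(t):t>0\}$ (legitimate since $\rho(t)\in L^\infty_+(\Omega)$ and $G(r(t))\in H^1(\Omega)$ for a.e.\ $t$): \eqref{eq:eep-a} follows from (3) and \eqref{eq:convergence-b}, \eqref{eq:eep-aa} from (2) and \eqref{eq:convergence-b}, so there is $C_U$, depending only on $c$ and $C$, with $\entropy_\psi(\rho(t))\le C_U\,\entropyproduction_\psi(\rho(t))$; inserting this into the integrated dissipation yields $\entropy_\psi(\rho(t))+C_U^{-1}\int_s^t\entropy_\psi(\rho)\,d\tau\le\entropy_\psi(\rho(s))$, and Grönwall together with (2) gives \eqref{eq:convergence-a}--\eqref{eq:convergence-b} with $\gamma_\psi=1/C_U$. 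For property~(5), fix $p\in[2,\infty)$ and observe that $\psi_p(s):=|s-1|^p$ satisfies \assumptions, the $C^2$-requirement in \eqref{eq:psipp} being the one responsible for the restriction $p\ge2$ (indeed $\psi_p''(s)=p(p-1)|s-1|^{p-2}$ is continuous iff $p\ge2$); hence the same solution $\rho$ also enjoys (1)--(4) with $\psi=\psi_p$. Since $1-p<0$, the identity $\entropy_{\psi_p}(\rho)=\int_\Omega|\rho-\rho_\infty|^p\rho_\infty^{1-p}\,dx$ gives
\[
(\sup\rho_\infty)^{1-p}\,\|\rho-\rho_\infty\|_{L^p(\Omega)}^p\le\entropy_{\psi_p}(\rho)\le(\inf\rho_\infty)^{1-p}\,\|\rho-\rho_\infty\|_{L^p(\Omega)}^p .
\]
Under \eqref{eq:convergence-b2}, $\entropy_{\psi_p}(\rho^0)$ is bounded in terms of $C$, $\rho_\infty$ and $\Omega$, while $\|\rho(t)\|_{L^1(\Omega)}\ge c$; so (4) applied to $\psi_p$ gives $\entropy_{\psi_p}(\rho(t))\le\entropy_{\psi_p}(\rho^0)e^{-\gamma t}$ with $\gamma$ uniform over \eqref{eq:convergence-b2}. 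Taking $p$-th roots in the displayed chain and using $(\sup\rho_\infty/\inf\rho_\infty)^{(p-1)/p}\le 1+\sup\rho_\infty/\inf\rho_\infty$ then yields \eqref{eq:convergence-c} with $\gamma_p=\gamma/p$.

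The main obstacle is the construction: obtaining the $\varepsilon$-uniform $H^1$-bound on $G(r_\varepsilon)$, the comparison-based lower $L^1$-bound, and, above all, the lower semicontinuity needed to transfer the entropy dissipation to the limit, all in the presence of the degeneracy at $\rho=0$ and the possible singularity of $g$ and $g'$ there; this is exactly the type of analysis carried out in detail in \cite{KV17}, which I would follow. Once that is in place, the asymptotics (4) and (5) are short consequences of Theorem~\ref{th:eep}, the genuinely new input being the use of the auxiliary entropies $\psi_p=|s-1|^p$ with $p\ge2$ to convert entropy decay into $L^p$-decay.
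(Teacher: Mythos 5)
Your proposal follows essentially the same route as the paper: existence and properties (1)--(3) are delegated to the approximation scheme of \cite{KV17} with an a.e.\ plus weak-$L^2$ limit passage and lower semicontinuity of the gradient part of the entropy production, property (4) comes from combining the dissipation inequality with Theorem~\ref{th:eep} and a Gr\"onwall-type argument uniform in the data $c,C$, and property (5) from applying (4) to $\psi_p(s)=|s-1|^p$, $p\ge 2$, with exactly the same $\sup\rho_\infty/\inf\rho_\infty$ constant manipulations. The only point the paper makes explicit that you pass over is the verification that the structural hypothesis of \cite{KV17} (every $u\in L^\infty_+(\Omega)$ can be bounded above, and below when bounded away from $0$, by profiles with constant fitness, here $u_c=\rho_\infty g^{-1}(c)$) still holds under \assumptions, which is what licenses reusing that construction.
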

\begin{proof}
For the proof of existence, the approximating procedure used in~\cite{KV17} is 
still applicable in the current setting.  As a matter of fact, the existence 
result in~\cite{KV17} requires that~$|f(x, \xi)|$ is either large or does not depend on~$x$ 
when~$\xi$ is near $0$ or near $+\infty$.  A similar requirement 
was imposed for large~$\xi$.  However, these assumptions are only needed in 
order to ensure that any $u \in L_+^\infty(\Omega)$ can be bounded from above by a 
function $u_c \colon \Omega \to \mathbb R$  satisfying $f(x, u_c(x)) \equiv cst$ 
and that $u$ can be bounded from below by another such function provided 
that~$u$ is uniformly bounded away from~$0$.  This is still the case in the current 
setting.  Indeed, assume for simplicity that~$u$ is continuous on~$\overline 
\Omega$.  Set $c = \max_\Omega g(u/\rho_\infty)$ and put $u_c = \rho_\infty 
g^{-1}(c)$, then clearly $f(x, u_c(x)) = - g(u_c(x)/\rho_\infty) = -c$; 
moreover, it follows from the monotonicity of~$g$ that $u \le u_c$, as 
required.  The existence of a lower bound is proved in a similar way, 
cf.~\cite[Remark~3.4]{KV17}.

Inequality~\eqref{eq:init-entropy} is proved in the same way as the analogous 
inequality in~\cite{KV17}.

We prove that the solution constructed as in~\cite{KV17} satisfies \eqref{eq:mea}.  
To this end it suffices to check that this inequality is preserved under the 
passage to the limit.  Specifically, assume that smooth enough approximate 
solutions $\{\rho_n\}$ are uniformly bounded in~$L^\infty(Q_T)$ and converge 
to $\rho$ a.~e.\ in~$Q_T$, while
\begin{equation*}
G_n := G(r_n) \to G(r) \qquad \text{weakly in $L^2(\Omega)$}
.
\end{equation*}
By the Lebesgue Dominated Convergence we have
\begin{gather}
\entropy_\psi(\rho_n) \to \entropy_\psi(\rho)
,
\label{eq:tmp9}
\\
\int_\Omega r_n g(r_n) \psi'(r_n) \cd \rho_\infty
\to
\int_\Omega r g(r) \psi'(r) \cd \rho_\infty
\label{eq:tmp10}
.
\end{gather}
Arguing as in~\cite[proof of Theorem~3.9]{KV17} and, in particular, taking into 
account that $\nabla G = 0$ a.~e.\ on the set $\{(x, t) \in Q_T \colon r = 0 
\}$ and $\nabla G_n = 0$ a.~e.\ on the set $\{(x, t) \in Q_T \colon r_n = 0 
\}$, we conclude that for any 
$\delta > 0$ we have
\begin{multline*}
\iint\limits_{\{(x, t) \in Q_T \colon r > 0 \}}
\frac{\chi(t)\psi''(r)}{\max(r, \delta) g'(r)}
|\nabla G|^2
\cd \rho_\infty \cd t
\\ \le
\liminf_{n \to \infty}
\iint\limits_{\{(x, t) \in Q_T \colon r_n > 0 \}}
\frac{\chi(t)\psi''(r_n)}{\max(r_n, \delta) g'(r_n)}
|\nabla G_n|^2
\cd \rho_\infty \cd t
\\ 
\le
\liminf_{n \to \infty}
\iint\limits_{\{(x, t) \in Q_T \colon r_n > 0 \}}
\frac{\chi(t)\psi''(r_n)}{r_n g'(r_n)}
|\nabla G_n|^2
\cd \rho_\infty \cd t
,
\end{multline*}
so sending $\delta \to \infty$ and applying Beppo Levy's theorem, we obtain
\begin{equation*}
\iint\limits_{\{(x, t) \in Q_T \colon r > 0 \}}
\frac{\chi(t)\psi''(r)}{r g'(r)}
|\nabla G|^2
\cd \rho_\infty \cd t
\le
\liminf_{n \to \infty}
\iint\limits_{\{(x, t) \in Q_T \colon r_n > 0 \}}
\frac{\chi(t)\psi''(r_n)}{r_n g'(r_n)}
|\nabla G_n|^2
\cd \rho_\infty \cd t
\end{equation*}
or, equivalently,
\begin{multline*}
\iint\limits_{\{(x, t) \in Q_T \colon r > 0 \}}
\chi(t) r g'(r) \psi''(r) |\nabla r|^2
\cd \rho_\infty \cd t\\
\le
\liminf_{n \to \infty}
\iint\limits_{\{(x, t) \in Q_T \colon r_n > 0 \}}
\chi(t) r_n g_n'(r) \psi''(r_n) |\nabla r_n|^2
\cd \rho_\infty \cd t
.
\end{multline*}
Combining this with~\eqref{eq:tmp9} and~\eqref{eq:tmp10}, we 
obtain~\eqref{eq:mea}.

We now prove the exponential convergence of the solution to the 
steady state. Let $\rho$ be a weak solution of~\eqref{eq:p1}--\eqref{eq:p3} with the initial 
data satisfying~\eqref{eq:convergence-b}.  Let $U \subset L^\infty_+$ be the set of functions such that 
for any $u \in U$, we have $G(u/\rho_\infty) \in H^1(\Omega)$ and 
$\|u\|_{L^1(\Omega)} \ge c$, $\entropy_\psi(u) \le C$ with the same~$c$ 
and~$C$ as in \eqref{eq:convergence-b}.  By Theorem~\ref{th:eep} we have the 
entropy-entropy production inequality~\eqref{eq:eep-b} for~$U$.
It follows from the bounds~\eqref{eq:init-entropy} and~\eqref{eq:lower}
that $\rho(t) \in U$ for a.~a.\ $t > 0$.  Combining the entropy dissipation 
and entropy-entropy production inequalities, we get
\begin{equation*}
\partial_t \entropy_\psi(\rho(t)) \le - C_U^{-1} \entropy_\psi(\rho(t))
\end{equation*}
in the sense of measures.  Set $\gamma_\psi = C_U^{-1}$ and $\phi(t) = 
\entropy_\psi(\rho(t)) e^{\gamma_\psi t}$.  It is easy to check that that 
$\partial_t \phi(t) \le 0$ in the sense of measures, whence $\phi$ a.~e.\ 
coincides with a nonincreasing function.  Moreover,
\begin{equation*}
\esssup_{t > 0} \phi(t)
= \esslimsup_{t \to 0} \phi(t)
= \esslimsup_{t \to 0} \entropy_\psi(\rho(t)) e^{\gamma_\psi t}
\le \entropy_\psi(\rho^0)
\end{equation*}
by virtue of~\eqref{eq:init-entropy}, so $\phi(t) \le \entropy_\psi(\rho^0)$ for 
a.~a.\ $t > 0$, which implies~\eqref{eq:convergence-a}. 

We will now use \eqref{eq:convergence-a} with $\psi(s)=|s-1|^p$, which is a $C^2$-function for $p\geq 2$, and satisfies the assumptions~\eqref{eq:psi1}--\eqref{eq:infty2}. We immediately get
\begin{multline}
\label{eq:convergence-d}
\|\rho(t)-\rho_\infty\|_{L^p(\Omega)} \le (\sup ~\rho_\infty)^{(p-1)/p}[\entropy_\psi(\rho(t))]^{1/p} \\ \le  (\sup ~\rho_\infty)^{(p-1)/p}[\entropy_\psi(\rho^0)]^{1/p} e^{-\gamma_\psi t/p}\\ \le  \left(\frac {\sup ~\rho_\infty}{\inf ~\rho_\infty}\right)^{(p-1)/p}\|\rho^0-\rho_\infty\|_{L^p(\Omega)}e^{-\gamma_p t}\\\le  \left(1+\frac {\sup ~\rho_\infty}{\inf ~\rho_\infty}\right)\|\rho^0-\rho_\infty\|_{L^p(\Omega)}e^{-\gamma_p t}
,
\end{multline} where $\gamma_p=\gamma_\psi/p$. Uniform boundedness of $\|\rho^0\|_{L^p}^p$ implies a bound on $\entropy_\psi(\rho^0)$.
\end{proof}

\section{Inequality}
\label{sec:ineq}
In this section we prove a refined version of our unbalanced convex Sobolev inequality in the smooth case.
\begin{theorem}
\label{th:psi}
Assume \assumptions.  Let $U \in C^\infty_+(\Omega)$ be such that
\begin{gather*}
\inf \big \{ \| \rho \|_{L^1(\Omega)} \colon \rho \in U \big \} > 0
,
\\
\sup \{ \entropy_\psi(\rho) \colon \rho \in U \} < \infty
.
\end{gather*}
Then there exist constants  (independent of $\rho$) $C > 0$, $0 < \alpha < \beta < \infty$, such that
\begin{equation}
\label{eq:eep}
\entropy_\psi(\rho) \le C
\left(
\int_\Omega
r g(r) \psi'(r) \cd \rho_\infty
+
\int_{[\alpha < r < \beta]}
rg'(r) \psi''(r) |\nabla r|^2 \cd \rho_\infty
\right)
\quad (\rho \in U)
.
\end{equation}
\end{theorem}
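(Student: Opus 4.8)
The plan is to reduce inequality~\eqref{eq:eep} to the relative isoperimetric inequality~\eqref{eq:iso} via a coarea/truncation argument, exploiting the ``quasihomogeneous'' structure~\eqref{f}. The first step is to split the entropy $\entropy_\psi(\rho) = \int_\Omega \psi(r)\,d\rho_\infty$ according to where $r$ is near the equilibrium value $1$ and where it is far. On the ``far'' region, the strict positivity~\eqref{eq:psi1} of $\psi$ combined with the strict positivity of $g$ away from $1$ (which follows from~\eqref{eq:g1} and monotonicity: $sg(s)\psi'(s) \ge$ const $>0$ for $r$ bounded away from $1$ in a compact range) lets us bound the contribution by a multiple of the Hellinger term $\int_\Omega rg(r)\psi'(r)\,d\rho_\infty$ alone. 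Care is needed near $r = 0$ and near $r = \infty$: near $0$, the bound~\eqref{eq:eep-aa}/\eqref{eq:eep-a} on mass and entropy confines the mass that can escape; near $\infty$, the superlinear growth~\eqref{eq:infty2} of $\psi'$ together with the entropy bound limits the measure of $[r > \beta]$, so choosing $\beta$ large enough and $\alpha$ small enough makes the ``very far'' contributions absorbable into the Hellinger term with a uniform constant.

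The core is the ``intermediate'' region where $r$ is close to $1$ but not equal to it. Here I would run a level-set argument: for a threshold $\lambda$ near $1$, consider the superlevel set $A_\lambda = [r > \lambda]$ (or sublevel set, symmetrically). By the relative isoperimetric inequality~\eqref{eq:iso} and the coarea formula, $\int_{[\alpha < r < \beta]} |\nabla r|\,dx \gtrsim \int \min(|A_\lambda|, |\Omega\setminus A_\lambda|)^{(d-1)/d}\,d\lambda$, which by Cauchy--Schwarz against the weight $rg'(r)\psi''(r)\rho_\infty$ (uniformly bounded above and below on the compact range $[\alpha,\beta]$, using~\eqref{eq:g1}, \eqref{eq:psipp}) is controlled by the gradient term in~\eqref{eq:eep}. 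On the other hand, $\entropy_\psi$ restricted to the intermediate band is, by the uniform bounds on $\psi$ and $\psi''$ there, comparable to $\int_{[\alpha<r<\beta]} (r-1)^2\rho_\infty\,dx$, which one controls by $\int\min(|A_\lambda|,|\Omega\setminus A_\lambda|)\,d\lambda$-type quantities; combining with the isoperimetric estimate (and noting that on the relevant range the two sides of the $\min$ are comparable because the mass $\int\rho$ is bounded away from $0$ and $\entropy_\psi$ is bounded, so neither $|A_\lambda|$ nor $|\Omega\setminus A_\lambda|$ can be too close to $|\Omega|$) closes the estimate on this band.

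The main obstacle, and where the choice of $\alpha,\beta$ and the constant $C$ must be made carefully, is the interplay at the boundary between the ``intermediate'' and ``far'' regimes: one must choose the cutoffs $\alpha < 1 < \beta$ so that simultaneously (i) the weight $rg'(r)\psi''(r)$ is bounded below on $[\alpha,\beta]$, (ii) the isoperimetric argument on the band $[\alpha,\beta]$ captures enough of the entropy, and (iii) the leftover tails $[r\le\alpha]$ and $[r\ge\beta]$ are dominated by the Hellinger term using only the constraints~\eqref{eq:eep-a}, \eqref{eq:eep-aa}. The uniformity of $C$, $\alpha$, $\beta$ over the family $U$ comes precisely from the fact that the only way $U$ enters is through the two scalar quantities $\inf_U\|\rho\|_{L^1}$ and $\sup_U\entropy_\psi$; I would make every estimate above depend on $\rho$ only through these. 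An alternative, slightly slicker route is to first prove the inequality in the form~\eqref{eq:eep} with $\psi$ replaced by the model quadratic $|s-1|^2$ via the isoperimetric/coarea argument, and then transfer to general $\psi$ using that $\psi(s)/|s-1|^2$ and $\psi''(s)$ are bounded above and below on every compact subinterval of $(0,\infty)\setminus\{1\}$ and that $\psi$ is superlinear at infinity — but the transfer near $s=1$ still requires the quadratic comparison $\psi(s)\asymp\psi''(1)(s-1)^2$, which is exactly where~\eqref{eq:psipp} is used.
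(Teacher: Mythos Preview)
There is a genuine gap in your treatment of the region $[r\le\alpha]$. You assert that the tails $[r\le\alpha]$ and $[r\ge\beta]$ are ``dominated by the Hellinger term using only the constraints~\eqref{eq:eep-a}, \eqref{eq:eep-aa}'', but this fails on the lower tail: by~\eqref{eq:g} we have $sg(s)\psi'(s)\to 0$ as $s\to 0$, whereas $\psi(s)\to\psi(0)>0$, so $\int_{[r\le\alpha]}\psi(r)\,d\rho_\infty\approx\psi(0)\,\rho_\infty([r\le\alpha])$ cannot be absorbed into $\int_{[r\le\alpha]} rg(r)\psi'(r)\,d\rho_\infty$. The mass and entropy bounds~\eqref{eq:eep-a}--\eqref{eq:eep-aa} give only a fixed (not small) upper bound on $|[r\le\alpha]|$, which is useless for~\eqref{eq:eep}. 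Your intermediate-band argument also has a problem: the claim that the gradient term alone controls $\int_{[\alpha<r<\beta]}(r-1)^2\rho_\infty$ is disproved by $r\equiv 1+\varepsilon$, for which the gradient term vanishes; and the assertion that ``the two sides of the $\min$ are comparable'' is false, since for $\lambda$ slightly below $1$ one may have $|A_\lambda|=|\Omega|$.

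The paper's architecture is essentially the reverse of yours. It chooses $0<\alpha<\beta<1$ (both below $1$, not straddling it) and proves a pointwise inequality $\psi(s)\le C_\alpha\, sg(s)\psi'(s)$ for all $s\ge\alpha$ (via L'H\^opital at $s=1$ and $s=\infty$); thus the Hellinger term alone controls the entropy on the entire region $[r>\alpha]$, including the neighbourhood of $1$. The gradient term is used for one purpose only: to bound $\sigma:=|[r\le\alpha]|$. The mass/entropy constraints give a uniform lower bound $\delta$ on $|[r\ge\beta]|$ (de~la~Vall\'ee~Poussin), and then the coarea/isoperimetric estimate on the band $[\alpha<r<\beta]$, combined with an AM--GM trick in the variable $\tau=|[\alpha<r<\beta]|$, yields $\sigma\le C\,D_{\alpha\beta}\entropy_\psi(\rho)$. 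This is the missing idea: the gradient term does not control entropy on an intermediate band, it controls the \emph{measure} of the bad set where the Hellinger pointwise bound fails.
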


The proof of Theorem \ref{th:psi} is based on the next two lemmas. 

\begin{lemma}
\label{lem:alphabeta}
Fix $0 < \alpha < \beta < 1$. Then
\begin{multline}
\label{eq:alphabeta}
\big| [\alpha < r < \beta] \big|
\int_{[\alpha < r < \beta]} rg'(r) \psi''(r) |\nabla r|^2 \, d\rho_\infty
\\
\ge
C_{\alpha\beta}
\min\left(
\big | [r \le \alpha] \big|^{2(d-1)/d},
\big | [r \ge \beta] \big|^{2(d-1)/d}
\right)
\end{multline}
\end{lemma}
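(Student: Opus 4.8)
The plan is to derive \eqref{eq:alphabeta} from the relative isoperimetric inequality \eqref{eq:iso} applied to the superlevel sets of $r$, using the coarea formula and a Cauchy--Schwarz estimate to bridge the two sides. I may assume that the integral on the left of \eqref{eq:alphabeta} is finite and that its right-hand side is positive, since otherwise there is nothing to prove; the latter forces $|[r\le\alpha]|>0$ and $|[r\ge\beta]|>0$, so by the intermediate value theorem the open set $[\alpha<r<\beta]$ is nonempty and hence of positive measure. On $[\alpha<r<\beta]$ the weight $r\,g'(r)\,\psi''(r)\,\rho_\infty$ is bounded below by a positive constant: $r\ge\alpha>0$; $g'$ is positive and continuous on $(0,\infty)$ by \eqref{eq:g1}; $\psi''$ is positive and continuous on the compact interval $[\alpha,\beta]$, which avoids $s=1$ since $\beta<1$, by \eqref{eq:psipp}; and $\rho_\infty$ is smooth and strictly positive on $\overline\Omega$. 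Thus $c_0:=\inf_{\overline\Omega}\rho_\infty\cdot\min_{s\in[\alpha,\beta]}s\,g'(s)\psi''(s)>0$, so $\int_{[\alpha<r<\beta]}|\nabla r|^2\,dx\le c_0^{-1}\int_{[\alpha<r<\beta]}r g'(r)\psi''(r)|\nabla r|^2\,d\rho_\infty<\infty$, and since $\Omega$ is bounded, Cauchy--Schwarz gives
\[
\bigl|[\alpha<r<\beta]\bigr|\int_{[\alpha<r<\beta]} r g'(r)\psi''(r)|\nabla r|^2\,d\rho_\infty
\ \ge\ c_0\,\bigl|[\alpha<r<\beta]\bigr|\int_{[\alpha<r<\beta]}|\nabla r|^2\,dx
\ \ge\ c_0\Bigl(\int_{[\alpha<r<\beta]}|\nabla r|\,dx\Bigr)^{2}.
\]

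Next I would rewrite the remaining integral by the coarea formula as $\int_{[\alpha<r<\beta]}|\nabla r|\,dx=\int_\alpha^\beta\mathcal H^{d-1}\bigl(\{r=t\}\cap\Omega\bigr)\,dt$, noting that for $t\in(\alpha,\beta)$ the level set $\{r=t\}$ lies automatically inside $[\alpha<r<\beta]$. For a.e.\ $t\in(\alpha,\beta)$, by Sard's theorem $t$ is a regular value of the smooth function $r$, so $A_t:=\{x\in\Omega\colon r(x)<t\}$ has finite perimeter in $\Omega$ with $P(A_t;\Omega)=\mathcal H^{d-1}\bigl(\{r=t\}\cap\Omega\bigr)$. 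Since $\alpha<t<\beta$ we have $[r\le\alpha]\subseteq A_t$ and $[r\ge\beta]\subseteq\Omega\setminus A_t$, whence $|A_t|\ge\bigl|[r\le\alpha]\bigr|$ and $|\Omega\setminus A_t|\ge\bigl|[r\ge\beta]\bigr|$. Applying \eqref{eq:iso} to $A_t$ and using that $x\mapsto x^{(d-1)/d}$ is nondecreasing, we obtain, for a.e.\ $t\in(\alpha,\beta)$,
\[
\mathcal H^{d-1}\bigl(\{r=t\}\cap\Omega\bigr)\ \ge\ C_\Omega\,\min\Bigl(\bigl|[r\le\alpha]\bigr|^{(d-1)/d},\ \bigl|[r\ge\beta]\bigr|^{(d-1)/d}\Bigr).
\]

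Integrating this bound over $t\in(\alpha,\beta)$ gives $\int_{[\alpha<r<\beta]}|\nabla r|\,dx\ge(\beta-\alpha)\,C_\Omega\,\min\bigl(|[r\le\alpha]|^{(d-1)/d},|[r\ge\beta]|^{(d-1)/d}\bigr)$, and substituting this into the Cauchy--Schwarz bound above yields \eqref{eq:alphabeta} with $C_{\alpha\beta}=c_0\,(\beta-\alpha)^2\,C_\Omega^2$. The only point requiring care is the identity $P(A_t;\Omega)=\mathcal H^{d-1}(\{r=t\}\cap\Omega)$ for a.e.\ $t$ together with the applicability of the coarea formula; both are classical for $C^1$ (a fortiori $C^\infty$) functions, so I do not anticipate a genuine obstacle. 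Note also that the hypotheses imposed on the family $U$ in Theorem~\ref{th:psi} are not used here --- the lemma holds for any single $\rho\in C^\infty_+(\Omega)$ --- and enter only when it is combined with the second lemma to prove the theorem.
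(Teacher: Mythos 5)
Your proof is correct and follows essentially the same route as the paper: a pointwise lower bound on the weight plus Cauchy--Schwarz reduces the left side to $\bigl(\int_{[\alpha<r<\beta]}|\nabla r|\,dx\bigr)^2$, which is then handled by the coarea formula and the relative isoperimetric inequality \eqref{eq:iso} applied to the sublevel sets $[r<t]$, $t\in(\alpha,\beta)$. The only (immaterial) difference is bookkeeping: you identify $P([r<t];\Omega)$ with $\mathcal H^{d-1}(\{r=t\}\cap\Omega)$ via Sard's theorem, while the paper stays in the BV framework and localizes using the support of the Gauss--Green measure.
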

\begin{proof}
If the minimum on the right-hand side vanishes, there is nothing to prove.  
Otherwise the set $[\alpha < r < \beta]$ has nonzero measure. In what follows, we use some facts from geometric 
measure theory, which can be found in \cite{Mag12}.  The relative perimeter of a Lebesgue measurable set $A$ of locally finite 
perimeter with respect to $\Omega$
is 
$
P(A; \Omega) = |\mu_A| (\Omega)
,
$
where $\mu_A:=\nabla 1_A$ is the Gauss-Green measure associated with~$A$. 
The support of~$\mu_A$ is contained in the topological boundary 
of~$A$.

 We have:
\begin{multline}
\int_{[\alpha < r < \beta]} r g'(r) \psi''(r) |\nabla r|^2 \, d\rho_\infty
\ge \inf_\Omega \rho_\infty \min_{s \in [\alpha, \beta]} (sg'(s) \psi''(s))
\int_{[\alpha < r < \beta]} |\nabla r|^2 \, dx
\\
\ge \frac{ \inf_\Omega \rho_\infty \min_{s \in [\alpha, \beta]} (s g'(s)
\psi''(s)) }{\big| [\alpha < r < \beta] \big|}
\left(
\int_{[\alpha < r < \beta]} |\nabla r| \, dx
\right)^2
\label{eq:tmp2}
\end{multline} 

The last integral is the variation of~$r$ over~$[\alpha < r < \beta]$, which 
can be computed using the coarea formula:
\begin{align}
\int_{[\alpha < r < \beta]} |\nabla r| \, dx
& =\int_{-\infty}^\infty P([r < t]; [\alpha < r < \beta]) \, dt
\notag
\\
& =\int_\alpha^\beta P([r < t]; [\alpha < r < \beta]) \, dt
\notag
\\
& =\int_\alpha^\beta P([r < t]; \Omega) \, dt
,
\label{eq:tmp3}
\end{align}
where we first use the observation that the support of the Gauss--Green 
measure associated with $[r < t]$ is disjoint with $[\alpha < r < \beta]$ 
whenever $t \le \alpha$ or $t \ge \beta$, and then we notice that if $\alpha < 
t < \beta$, then the part of the support of the Gauss--Green measure of $[r < 
t]$ lying in~$\Omega$ is contained in $[\alpha < r < \beta]$.

Invoking the relative isoperimetric inequality \eqref{eq:iso}, we estimate
\begin{equation*}
P([r < t]; \Omega) \ge C_\Omega \min\left( \big | [r < t] \big|^{(d-1)/d}, 
\big| \Omega \setminus [r < t] \big|^{(d-1)/d}
\right)
\end{equation*}
and since for $t \in (\alpha, \beta)$ we have
\begin{equation*}
[r \le \alpha] \subset [r < t] \subset [r < \beta] = \Omega \setminus [r \ge 
\beta]
\end{equation*}
we see that
\begin{equation*}
P([r < t]; \Omega)
\ge C_\Omega
\min\left(
\big | [r \le \alpha] \big|^{(d-1)/d},
\big | [r \ge \beta] \big|^{(d-1)/d}
\right)
\end{equation*}
Combining this estimate with~\eqref{eq:tmp2} and~\eqref{eq:tmp3}, we 
obtain~\eqref{eq:alphabeta}.
\end{proof}
\begin{lemma}
\label{lem:algterm}
Given $\varepsilon > 0$, there exists $C_\varepsilon > 0$ such that
\begin{equation} \label{eq:lem}
\psi(s) \le C_\varepsilon s g(s) \psi'(s) \quad (s \ge \varepsilon)
.
\end{equation}
\end{lemma}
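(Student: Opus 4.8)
The plan is to reduce the claimed inequality $\psi(s)\le C_\varepsilon\, sg(s)\psi'(s)$ to a pointwise comparison of nonnegative continuous functions on $[\varepsilon,\infty)$, handled separately near $s=1$, on a compact piece of $[\varepsilon,\infty)$ bounded away from $1$, and for $s\to\infty$. The key observation is that all three factors on the right-hand side carry a sign: by \eqref{eq:g1} we have $g'(s)>0$ with $g(1)=0$, so $g(s)>0$ for $s>1$ and $g(s)<0$ for $s<1$; by \eqref{eq:psipp} and $\psi(1)=0$, $\psi'(s)>0$ for $s>1$ and $\psi'(s)<0$ for $s<1$. Hence $g(s)\psi'(s)\ge 0$ for all $s>0$ (strictly positive for $s\ne1$), and since $s>0$ the product $sg(s)\psi'(s)$ is nonnegative, vanishing exactly at $s=1$, which matches $\psi$ vanishing exactly at $s=1$ (by \eqref{eq:psi1}). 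So both sides are nonnegative continuous functions with the same zero set, and the inequality is really a statement about comparable rates of vanishing at $s=1$ and comparable growth at $\infty$.

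First I would handle the behaviour near $s=1$. Taylor-expanding, $\psi(s)=\tfrac12\psi''(1)(s-1)^2+o((s-1)^2)$ (using $\psi\in C^2$ and $\psi'(1)=0$, which follows since $\psi$ has a minimum at $1$), while $g(s)=g'(1)(s-1)+o(s-1)$ and $\psi'(s)=\psi''(1)(s-1)+o(s-1)$, so $sg(s)\psi'(s)=g'(1)\psi''(1)(s-1)^2+o((s-1)^2)$ with $g'(1)\psi''(1)>0$. Therefore the ratio $\psi(s)/(sg(s)\psi'(s))\to \tfrac{1}{2g'(1)}$ as $s\to1$, so the ratio is bounded on a neighbourhood of $1$. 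Next, on any compact set $K\subset[\varepsilon,\infty)$ that excludes a fixed neighbourhood of $1$, both $\psi$ and $sg(s)\psi'(s)$ are continuous and the denominator is bounded below by a positive constant (it is continuous and strictly positive on $K$), so the ratio is bounded on $K$ as well. Combining these two gives a bound $\psi(s)\le C\, sg(s)\psi'(s)$ for all $s$ in any fixed compact subinterval $[\varepsilon, M]$.

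The remaining, and I expect slightly delicate, point is the regime $s\to\infty$. Here I would use \eqref{eq:infty2}, which gives $\psi'(s)\to\infty$, together with $g(s)\ge g(s_0)>0$ for $s\ge s_0>1$, to bound the denominator below: $sg(s)\psi'(s)\ge s_0\,g(s_0)\,\psi'(s)$ for large $s$. For the numerator, since $\psi'$ is increasing (by \eqref{eq:psipp}) and $\psi(1)=0$, we have $\psi(s)=\int_1^s\psi'(\xi)\,d\xi\le (s-1)\psi'(s)\le s\,\psi'(s)$. This alone is not quite enough because of the extra factor $s$; so instead I would split the integral: for $s\ge 2$, $\psi(s)=\int_1^{s/2}\psi'(\xi)\,d\xi+\int_{s/2}^s\psi'(\xi)\,d\xi\le \psi(s/2)+\tfrac{s}{2}\psi'(s)$, and since $\psi$ is convex and eventually increasing, $\psi(s/2)\le \psi(s)/? $ — this is the circular-looking step, so more cleanly: convexity of $\psi$ gives $\psi'(\xi)\ge \psi(s)/(s-1)$ for $\xi$ near $s$ is false in general, but $\psi'(\xi)\,\xi$ versus $\psi$ is exactly the sub/super-homogeneity relation we want. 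The cleanest route is: by convexity, for $s\ge 2$, $\tfrac{\psi(s)-\psi(1)}{s-1}\le \psi'(s)$, i.e. $\psi(s)\le (s-1)\psi'(s)$, hence
\begin{equation*}
\frac{\psi(s)}{sg(s)\psi'(s)}\le \frac{(s-1)\psi'(s)}{s\,g(s)\,\psi'(s)}=\frac{s-1}{s\,g(s)}\le \frac{1}{g(s)}\le \frac{1}{g(s_0)}
\end{equation*}
for $s\ge s_0$ with $s_0>1$ fixed. This disposes of the large-$s$ regime cleanly, and combining it with the compact-interval bound from the previous paragraph yields the lemma with $C_\varepsilon$ the maximum of the two constants. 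The only subtlety to watch is ensuring $s_0$ can be taken independent of $\varepsilon$ (it can, e.g. $s_0=2$) so that the two regimes overlap and cover all of $[\varepsilon,\infty)$.
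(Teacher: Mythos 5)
Your compact-set argument and your treatment of the regime $s\to\infty$ are correct (the convexity bound $\psi(s)\le(s-1)\psi'(s)$ there is clean and even avoids \eqref{eq:infty2}), but the step near $s=1$ has a genuine gap. You Taylor-expand $\psi(s)=\tfrac12\psi''(1)(s-1)^2+o((s-1)^2)$ and conclude that the ratio tends to $1/(2g'(1))$ ``with $g'(1)\psi''(1)>0$''. The hypothesis \eqref{eq:psipp} only asserts $\psi''(s)>0$ for $s\ne 1$; the value $\psi''(1)=0$ is perfectly admissible --- for instance a $\psi$ behaving like $(s-1)^4$ near $s=1$ satisfies \eqref{eq:psi1}--\eqref{eq:infty2}. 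In that case both of your expansions degenerate to $o((s-1)^2)$, the limit you compute is a $0/0$ at the order you expanded to, and the argument gives no bound on the ratio in a neighbourhood of $1$. (The conclusion is still true there, but your proof does not establish it.)

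The fix is already contained in your own large-$s$ argument: since $\psi\in C^2$ with $\psi''\ge 0$, $\psi$ is convex, and convexity together with $\psi(1)=0$ gives $\psi(s)\le(s-1)\psi'(s)$ for \emph{every} $s>0$ (for $s<1$ both factors are negative). Hence for $s\ne 1$, dividing by the positive quantity $sg(s)\psi'(s)$, one gets $\psi(s)/(sg(s)\psi'(s))\le (s-1)/(sg(s))$; the right-hand side has a removable singularity at $s=1$ (with value $1/g'(1)$, since $g\in C^1$ and $g(1)=0$), is continuous on $[\varepsilon,2]$, and is at most $1/g(2)$ for $s\ge 2$, so it is bounded on $[\varepsilon,\infty)$ and the whole lemma follows from this one inequality, with no three-regime splitting needed. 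For comparison, the paper proceeds differently: it bounds $\liminf$ of the reciprocal ratio as $s\to1$ and $s\to\infty$ via L'H\^opital's rule for $\liminf$, discarding the term $g\psi''/\psi'\ge 0$ by the sign coincidence of $g$ and $\psi'$ --- an argument that likewise never touches $\psi''(1)$.
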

\begin{proof}
Applying L'Hôpital's rule for $\liminf$, and remembering that $g$ is an increasing function, we obtain
\begin{equation}
\liminf_{s \to \infty}
\frac{s g(s) \psi'(s)}{\psi(s)} \geq \liminf_{s \to \infty}\left(
g(s)+s g'(s)+\frac{s g(s) \psi''(s)}{\psi'(s)}\right)\ge \lim_{s \to \infty} g(s)> 0,
\label{eq:infty} \end{equation}
\begin{equation}
\label{eq:tmp1}
\liminf_{s \to 1}\frac{sg(s)\psi'(s)}{\psi(s)}=\liminf_{s \to 1}\frac{g(s)\psi'(s)}{\psi(s)}
\ge
\liminf_{s \to 1}
\left(
g'(s)
+
\frac{g(s)\psi''(s)}{\psi'(s)}
\right)
\ge
g'(1) > 0.
\end{equation}
 In \eqref{eq:infty} and \eqref{eq:tmp1} we have used the fact that for $s \ne 1$, the signs of~$g(s)$ 
and~$\psi'(s)$ coincide, while $\psi''(s) > 0$. Obviously, \eqref{eq:infty} and \eqref{eq:tmp1} imply \eqref{eq:lem}.
\end{proof}


\begin{proof}[Proof of Theorem~\ref{th:psi}]

We claim that there exists $\beta > 0$ such that
\begin{equation}
\label{eq:defbeta}
\delta := \inf_{\rho \in U} \big| [ r \ge \beta ] \big| > 0
\end{equation}
Indeed, it follows from~\eqref{eq:infty2} (L'Hôpital's rule) that
\begin{equation*}
\lim_{s \to \infty} \frac{\psi(s)}{s} = \infty
.
\end{equation*}
As the entropy~$\entropy_\psi$ is bounded on~$U$, by de la Vallée Poussin's
theorem the set~$U$ is uniformly integrable.  Put
\begin{equation*}
m = \frac{1}{2|\Omega|} \inf_{\rho \in U} \| \rho \|_{L^1(\Omega)}
;
\end{equation*}
for any $\rho \in U$ we have
\begin{equation*}
2|\Omega|m \le \| \rho \|_{L^1(\Omega)}
= \int_{[\rho < m]} \rho \, dx
+ \int_{[\rho \ge m]} \rho \, dx
\le |\Omega| m + \omega_U\left(\big|[\rho \ge m] \big|\right)
,
\end{equation*}
where $\omega_U$ is the modulus of integrability of~$U$.  Hence
\begin{equation*}
\omega_U\left(\big|[\rho \ge m] \big|\right) \ge |\Omega| m
,
\end{equation*}
which clearly implies a lower bound on $\big|[\rho \ge m]\big|$ and a fortiori 
on $\big|[r \ge \beta]\big|$ with $\beta = \frac m {\sup \rho_\infty}$.

Clearly, there is no loss in generality in assuming $\beta < 1$ 
in~\eqref{eq:defbeta}.



In what follows we fix $\alpha$ and $\beta$ such that $0 < \alpha < \beta < 1$ 
and $\beta$ satisfies~\eqref{eq:defbeta}.  Denote
\begin{gather*}
\sigma := \big| [r \le \alpha ] \big|,
\\
\tau := \big| [\alpha < r < \beta ] \big|
\end{gather*}
and also
\begin{equation*}
D_{\alpha\beta}\entropy_\psi(\rho) :=
\int_\Omega
r g(r) \psi'(r) \cd \rho_\infty
+
\int_{[\alpha < r < \beta]}
rg'(r) \psi''(r) |\nabla r|^2 \cd \rho_\infty
.
\end{equation*}

Assume for now that $\sigma > 0$.
Using Lemma~\ref{lem:alphabeta}, we have
\begin{align*}
D_{\alpha\beta}\entropy_\psi(\rho)
&
\ge
\int_{[\alpha < r < \beta]}
rg(r) \psi'(r)
\, d \rho_\infty
+ \int_{[\alpha < r < \beta]} rg'(r) \psi''(r)
|\nabla r|\, d \rho_\infty
\\
&
\ge \left(\min_{s \in [\alpha, \beta]} sg(s) \psi'(s)\right)
\tau
+ C_{\alpha\beta} \frac 1\tau \min\left(\sigma^{2(d-1)/d}, \big|[r \ge 
\beta]\big|^{2(d-1)/d}\right)
.
\end{align*}
Taking into account~\eqref{eq:defbeta}, we can write
\begin{equation*}
D_{\alpha\beta}\entropy_\psi(\rho)
\ge
\frac c2
\left(
\tau
+\frac{\min(\sigma^{2(d-1)/d}, \delta^{2(d-1)/d})}{\tau}
\right)
\end{equation*}
with $c$ independent of~$\rho$.  Estimating
\begin{equation*}
\tau +\frac{\min(\sigma^{2(d-1)/d}, \delta^{2(d-1)/d})}{\tau}
\ge
2\min(\sigma^{(d-1)/d}, \delta^{(d-1)/d})
,
\end{equation*}
we obtain
\begin{equation}
\label{eq:tmp4}
D_{\alpha\beta}\entropy_\psi(\rho) \ge
c\min(\sigma^{(d-1)/d}, \delta^{(d-1)/d})
.
\end{equation}
If $\sigma = 0$, this estimate trivially holds with any $c$.
Since $\sigma$ is a priori bounded from above by~$|\Omega|$, \eqref{eq:tmp4} implies that
\begin{equation}
\label{eq:tmp5}
\sigma \leq C \min\left(\frac \sigma {|\Omega|^{1/d}}, \frac{\delta^{(d-1)/d}\sigma}{|\Omega|} \right)\leq C\min(\sigma^{(d-1)/d}, \delta^{(d-1)/d})\leq C D_{\alpha\beta}\entropy_\psi(\rho).
\end{equation}

Evoking Lemma~\ref{lem:algterm}, we obtain
\begin{align*}
\entropy_\psi(\rho)
& =
\int_{[r > \alpha]} \psi(r) \, d\rho_\infty
+
\int_{[r \le \alpha]} \psi(r) \, d\rho_\infty
\\
&
\le C_\alpha
\int_{[r > \alpha]} r \psi'(r) g(r) \, d\rho_\infty
+
\psi(0) \int_{[r \le \alpha]} \, d\rho_\infty
\\
& \le C_\alpha
D_{\alpha\beta}\entropy_\psi(\rho)
+
C_0 \big| [ r \le \alpha ] \big|
\\
& \le C D_{\alpha\beta}\entropy_\psi(\rho) + C\sigma
.
\end{align*}
Using~\eqref{eq:tmp5} to estimate $\sigma$ by $D_{\alpha\beta}\entropy_\psi$, 
we obtain~\eqref{eq:eep}
\end{proof}
\appendix
\section{Reverse Fatou's Lemma for products}

\begin{lemma}
\label{lem:fatou}
Let $(S, \Sigma, \mu)$ be a measure space.  Suppose that $\{f_n\}$ is bounded 
in~$L^\infty(S, \mu)$ and $\{g_n\}$ converges to a nonnegative limit $g$ 
in~$L^1(S, \mu)$.  Then
\begin{equation}
\label{eq:fatou}
\limsup_{n \to \infty} \int_S f_n g_n \cd \mu
\le
\int_S \left( \limsup_{n \to \infty} f_n \right) g \cd \mu
.
\end{equation}
\begin{proof}
As we have $|f_n g| \le (\sup_n \|f_n\|) g$, we can use Reverse Fatou's Lemma 
obtaining
\begin{equation}
\label{eq:tmp11}
\limsup_{n \to \infty} \int_S f_n g \cd \mu
\le 
\int_S \left(\limsup_{n \to \infty} f_n  g\right)  \cd \mu=
\int_S \left(\limsup_{n \to \infty} f_n \right) g  \cd \mu
.
\end{equation}
 Further, it is 
clear that
\begin{equation}
\label{eq:tmp12}
\lim_{n \to \infty} \int_S f_n(g_n - g) \cd \mu
= 0
.
\end{equation}
Using~\eqref{eq:tmp11} and~\eqref{eq:tmp12} we obtain
\begin{align*}
\limsup_{n \to \infty} \int_S f_n g_n &=
\limsup_{n \to \infty} \left(
\int_S f_n g \cd \mu
+ \int_S f_n(g_n - g) \cd \mu
\right)
\\&
=
\limsup_{n \to \infty} \int_S f_n g \cd \mu
+
\lim_{n \to \infty} \int_S f_n(g_n - g) \cd \mu
\\&
\le
\int_S \left(\limsup_{n \to \infty} f_n\right) g  \cd \mu
,
\end{align*}
as claimed.
\end{proof}
\end{lemma}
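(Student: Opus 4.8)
The plan is to reduce the statement to the classical Reverse Fatou Lemma by peeling off the $L^1$-convergent factor. Set $M := \sup_n \|f_n\|_{L^\infty(S,\mu)}$, which is finite by hypothesis, and split
\begin{equation*}
\int_S f_n g_n \cd \mu = \int_S f_n g \cd \mu + \int_S f_n(g_n - g) \cd \mu .
\end{equation*}
The second integral will be negligible: because $|f_n| \le M$ almost everywhere, it is bounded in absolute value by $M\|g_n - g\|_{L^1(S,\mu)}$, which tends to $0$ since $g_n \to g$ in $L^1$. Thus this term contributes nothing in the limit, and in fact its full limit (not merely its $\limsup$) is zero.

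For the first integral I would apply the classical Reverse Fatou Lemma to the sequence $f_n g$. The crucial observation is that $f_n g \le M g$ almost everywhere (here the sign of $g$ is what matters) and $Mg \in L^1(S,\mu)$, so $Mg$ serves as the integrable majorant required by Reverse Fatou. This yields
\begin{equation*}
\limsup_{n\to\infty} \int_S f_n g \cd \mu \le \int_S \limsup_{n\to\infty}(f_n g)\cd \mu = \int_S \Big(\limsup_{n\to\infty} f_n\Big) g \cd \mu ,
\end{equation*}
the last equality being valid because $g$ is a fixed nonnegative function and therefore commutes with the pointwise $\limsup$.

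It then remains to recombine the two pieces. Since the $(g_n-g)$-term vanishes in the limit, the $\limsup$ of the full integral coincides with the $\limsup$ of $\int_S f_n g \cd \mu$, and the displayed inequality above closes the argument. The only genuinely delicate point — the step I would flag as the heart of the matter — is the appeal to Reverse Fatou, which breaks down without the nonnegativity of $g$: it is precisely this hypothesis that furnishes the integrable majorant $Mg$ and that allows $g$ to pass through the $\limsup$. Everything else is a routine additivity-and-estimate manipulation.
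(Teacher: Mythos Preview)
Your proof is correct and follows essentially the same approach as the paper: split $\int_S f_n g_n$ into $\int_S f_n g$ plus a remainder controlled by $M\|g_n-g\|_{L^1}$, apply the classical Reverse Fatou Lemma to $f_n g$ using the integrable majorant $Mg$ furnished by $g\ge 0$, and recombine. The only cosmetic difference is that the paper phrases the domination as $|f_n g|\le (\sup_n\|f_n\|)\,g$ rather than the one-sided $f_n g\le Mg$, but either form suffices.
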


\subsection*{Acknowledgment}
The research was partially supported by the Portuguese Government through FCT/MCTES and by the ERDF through PT2020 (projects UID/MAT/00324/2019, PTDC/MAT-PUR/28686/2017 and TUBITAK/0005/2014).

\subsection*{Conflict of interest statement} We have no conflict of interest to declare.

\end{document}